\theoremstyle{plain}
\newtheorem{theorem}{Theorem}[section]
\newtheorem{proposition}[theorem]{Proposition}
\newtheorem{lemma}[theorem]{Lemma}
\theoremstyle{definition}
\newtheorem{definition}{Definition}[section]
\theoremstyle{remark}
\newtheorem{example}{Example}
\numberwithin{equation}{section}
\begin{document}

\title[On strongly separately continuous functions]%
{On strongly separately continuous functions\\ on sequence spaces}
\author[Olena Karlova \and Tom\'{a}\v{s} Visnyai]%
{Olena Karlova* \and Tom\'{a}\v{s} Visnyai**}

\newcommand{\acr}{\newline\indent}

\address{\llap{*\,} Chernivtsi National University,\acr
                   Faculty of Mathematics and Informatics,\acr
                   Department of Mathematical Analysis,\acr 
                   Kotsyubyns'koho 2, 58 012 Chernivtsi,\acr
                   Ukraine}
\email{maslenizza.ua@gmail.com}

\address{\llap{**\,} Institute of Information Engineering, Automation and Mathematics, \acr
                   Faculty of Chemical and Food Technology, \acr
                   Slovak University of Technology in Bratislava,\acr
                   Radlinskeho 9, 812 37 Bratislava,\acr
                   Slovak Republic}
\email{tomas.visnyai@stuba.sk}

\subjclass[2010]{Primary 54C08, 54C30; Secondary  26B05}
\keywords{strongly separately continuous function; determining set; Baire classification}

\begin{abstract}
  We study strongly separately continuous real-valued function defined on the Banach spaces $\ell_p$.  Determining sets for the class of strongly separately continuous functions on $\ell_p$ are characterized. We prove that for every $1\le \alpha<\omega_1$ there exists a strongly separately continuous function which belongs the $(\alpha+1)$'th Baire class and does not belong to the $\alpha$'th Baire class on $\ell_p$. We show that any open set in $\ell_p$ is the set of discontinuities of a strongly separately continuous real-valued function.
\end{abstract}

\maketitle

\section{Introduction}
Let $(X_t:t\in T)$ be a family of sets $X_t$ with $|X_t|>1$ for all $t\in T$. For $S\subseteq T$ we put $X_S=\prod\limits_{t\in S}X_t$. If $S\subseteq S_1\subseteq T$, $x=(x_t)_{t\in T}\in X_T$, $a=(a_t)_{t\in S_1}\in X_{S_1}$, then we denote by $x_S^a$ the point $(y_t)_{t\in T}\in X_T$ defined by
$$
y_t=\left\{\begin{array}{ll}
             a_t, & t\in S, \\
             x_t, & t\in T\setminus S.
           \end{array}
\right.
$$
In the case  of $S=\{s\}$ we write $x_s^a$ instead of $x_{\{s\}}^a$.

If $S\subseteq T$, then we put $\pi_S(x)=(x_t)_{t\in S}$.

For $n\in\mathbb N$ let $\sigma_n(x)=\{y=(y_t)_{t\in T}\in X_T: |\{t\in T:y_t\ne x_t\}|\le n\}$ and $\sigma(x)=\bigcup\limits_{n=1}^\infty \sigma_n(x)$. Each of the sets of the form $\sigma(x)$ we call {\it a $\sigma$-product of $X_T$}.

A set $X\subseteq X_T$ is called {\it $\mathcal S$-open}~\cite{KaMykh:MasloSSC} if $\sigma_1(x)\subseteq X$ for all $x\in X$.
Notice that the definition of an $\mathcal S$-open set develops the definition of a set of the type ($P_1$) introduced in \cite{CSV}. Observe that $\sigma$-products of two distinct points of $X_T$ either coincide, or do not intersect. Thus, the family of all $\sigma$-products of an arbitrary $\mathcal S$-open set $X\subseteq X_T$ generates a partition of $X$ on  mutually disjoint $\mathcal S$-open sets, which we will call {\it $\mathcal S$-components of $X$.}


Let $X\subseteq X_T$ be an $\mathcal S$-open set, $\tau$ be a topology on $X$ and let $(Y,d)$ be a metric space. A mapping $f:(X,\tau)\to Y$ is called {\it strongly separately continuous at a point $a\in X$ with respect to the $t$-th variable} if
  $$
\lim_{x\to a}d(f(x),f(x_t^a))=0.
  $$
A mapping $f:X\to Y$ is {\it strongly separately continuous at a point $a\in X$} if $f$ is strongly separately continuous at  $a$ with respect to every variable $t\in T$; and $f$ is {\it strongly separately continuous on the set $X$} if $f$ is strongly separately continuous  at every point $a\in X$ with respect to every variable $t\in T$. Strongly separately continuous functions we will also call {\it ssc-functions} for short.

The notion of real-valued ssc-function defined on $\mathbb R^n$ was introduced by  Omar Dzagnidze~\cite{Dzagnidze}, who proved
that a function $f:\mathbb R^n\to\mathbb R$ is strongly separately continuous at $x^0$ if and only if $f$ is continuous at $x^0$.

In \cite{CSV} the authors extended the notion of the strong separate continuity to functions defined on the Hilbert space $\ell_2$ equipped with the  norm topology. They proved that there exists a real-valued ssc-function on $\ell_2$ which is everywhere discontinuous. Determining sets (see Definition~\ref{def:det_set}) for for the class of all ssc-functions were also studied in~\cite{CSV}.

The second named author continued to study properties of strongly separately continuous functions on $\ell_2$ in  \cite{TV} and constructed a strongly separately continuous function $f:\ell_2\to\mathbb R$ which belongs to the third Baire class and is not quasi-continuous at every point. Moreover, he gave a sufficient condition for a strongly separately continuous function to be continuous on $\ell_2$ and a sufficient condition for a subset of $\ell_2$ to be determining  in the class of real-valued ssc-functions.

The first named author extended the concept of an ssc-function on any $\mathcal S$-open subset of a product of topological spaces \cite{KaMykh:MasloSSC}. A characterization of the set of all points of discontinuity of strongly separately continuous functions defined on a $\sigma$-product of a sequence of finite-dimensional normed spaces was given in \cite{KaMykh:MasloSSC}. Further, the Baire classification of ssc-functions defined on the space $\mathbb R^\omega$ equipped with the topology of pointwise convergence was investigated in~\cite{KaRAEX:2015}. Moreover, it was shown in \cite{KaRAEX:2015} that if  $X$ is a product of normed spaces and $a\in X$ then for any open set $G\subseteq \sigma(a)$ there is a strongly separately continuous function $f:\sigma(a)\to \mathbb R$ such that the discontinuity point set of $f$ is equal to~$G$.
Strongly separately continuous functions defined on a box-product of topological spaces were considered in~\cite{KaMatStud:2015}.

Here we study ssc-functions defined on the spaces $\ell_p$ with $1\le p<+\infty$ of all sequences $(x_n)_{n=1}^\infty$ of real numbers for which the series $\sum\limits_{n=1}^\infty |x_n|^p$ is convergent. In Section~\ref{sec:det_sets} we find a necessary and sufficient condition on a subset of $\ell_p$ to be determining in the class of all real-valued ssc-functions on $\ell_p$. In the third section we show that for any ordinal $\alpha\in [1,\omega_1)$ there exists an $\mathcal S$-open set $E\subseteq\ell_p$ which is of the $\alpha$'th additive Borel class and does not belong to the $\alpha$'th multiplicative Borel class. Using this fact we construct a real-valued ssc-function on $\ell_p$ which belongs to the $(\alpha+1)$'th Baire class and does not belong to the $\alpha$'th Baire class.  In Section~\ref{sec:discont} we prove that for any open nonempty set $G\subseteq \ell_p$ and  $1\le p<\infty$ there exists a strongly separately continuous function $f:\ell_p\to \mathbb R$ which is discontinuous exactly on $G$.

\section{Determining sets for  strongly separately continuous functions}\label{sec:det_sets}
Let $(X,Y)$ be a pair of topological spaces and $\mathcal F(X,Y)$ be a class of mappings between $X$ and $Y$.
\begin{definition}\label{def:det_set}
  {\rm A set $E\subseteq X$ is called {\it determining for the class $\mathcal F(X,Y)$} if for any mappings $f,g\in\mathcal F(X,Y)$ the equality  $f|_E=g|_E$ implies that $f=g$ on $X$.}
\end{definition}

It is well-known that any everywhere dense subset $E$ of a topological space  $X$ is determining in the class $C(X,\mathbb R)$ of all continuous real-valued functions on $X$. The theorem of Sierpi\'{n}ski~\cite{Sierp} tells us that any everywhere dense subset $E\subseteq\mathbb R^2$ is determining in the class $CC(\mathbb R^2,\mathbb R)$   of all separately continuous real-valued functions on $\mathbb R^2$.

In this section we give necessary and sufficient conditions on a subset of an $\mathcal S$-open set $X\subseteq\prod\limits_{n=1}^\infty X_n$ to be determining for the class ${\rm SSC}(X)$ of all strongly separately continuous real-valued functions on $X$.

The following two notions were introduced in \cite{KaMykh:MasloSSC}.

\begin{definition}
  {\rm A set $A\subseteq X_T$ is called {\it projectively symmetric with respect to a point $a\in A$} if $x_{t}^a\in A$ for all $t\in T$
  and $x\in A$.}
\end{definition}

\begin{definition}
  {\rm Let $X\subseteq X_T$ and $\tau$ be a topology on $X$. Then $(X,\tau)$ is said to be {\it locally projectively symmetric} if every $x\in X$ has a base of projectively symmetric neighborhoods with respect to $x$.}
\end{definition}

\begin{definition}
  {\rm Let $(X_t:t\in T)$ be a family of topological spaces and $X\subseteq X_T$ be an $\mathcal S$-open set. We say that a topology $\tau$ on $X$ is
  {\it finitely generated} if for every $a\in X$ and every finite set $S\subseteq T$ the space $(X_S\times\prod\limits_{t\in T\setminus S}\{a_t\},\tau)$ is homeomorphic to the space $X_S$ with the topology of pointwise convergence.}
\end{definition}

Notice that an arbitrary $\mathcal S$-open subset of a product $X_T$ of topological spaces $X_t$ equipped  with the  topology of pointwise convergence is a locally projectively symmetric space with a finitely generated topology. All classical spaces of sequences as the space $c$ of all convergence sequences or the spaces $\ell_p$ with $0<p\le \infty$ are locally projectively symmetric with a finitely generated topology.

Throughout the paper we consider only finitely generated topologies.

We need the following result~\cite[Theorem 3.4]{KaMykh:MasloSSC}.

\begin{theorem}\label{th:operations-ssc}
Let  $X\subseteq X_T$ be an $\mathcal S$-open set equipped with a locally projectively symmetric topology $\tau$ and $x_0\in X$.
  \begin{enumerate}
    \item[(i)]   If $f:X\to\mathbb R$ and $g:X\to \mathbb R$ are ssc-functions at $x_0$, then
    $f(x)\pm g(x)$, $f(x)\cdot g(x)$, $|f(x)|$, $\min\{f(x),g(x)\}$ and $\max\{f(x),g(x)\}$ are ssc-functions at $x_0$.

    \item[(ii)] If $(f_n)_{n=1}^\infty$ is a sequence of  ssc-functions at the point $x_0$ and the series $\sum\limits_{n=1}^\infty f_n(x)$ is convergent uniformly on $X$, then the sum $f(x)$ is an ssc-function at $x_0$.

    \item[(iii)] If $(f_i)_{i\in I}$ is a locally finite family of ssc-functions  $f_i:X\to \mathbb R$ at $x_0$, then $f(x)=\sum\limits_{i\in I}f_i(x)$ is an ssc-function at $x_0$.
  \end{enumerate}
\end{theorem}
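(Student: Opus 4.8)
The plan is to verify the defining limit condition
\[
\lim_{x\to x_0}|h(x)-h(x_t^{x_0})|=0
\]
for each candidate function $h$ and each variable $t\in T$, reducing everything to the two hypotheses $|f(x)-f(x_t^{x_0})|\to 0$ and $|g(x)-g(x_t^{x_0})|\to 0$ as $x\to x_0$. For the sum and difference this is immediate from the triangle inequality, since
\[
|(f\pm g)(x)-(f\pm g)(x_t^{x_0})|\le|f(x)-f(x_t^{x_0})|+|g(x)-g(x_t^{x_0})|.
\]
For $|f|$ the reverse triangle inequality gives $\big||f(x)|-|f(x_t^{x_0})|\big|\le|f(x)-f(x_t^{x_0})|$, so $|f|$ is ssc at $x_0$. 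Finally, writing $\max\{f,g\}=\tfrac12(f+g+|f-g|)$ and $\min\{f,g\}=\tfrac12(f+g-|f-g|)$ and using the cases already settled (together with the trivial fact that a constant multiple of an ssc function is ssc), the lattice operations follow. So the only genuinely new point in (i) is the product.

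For the product I would use the standard splitting
\[
(fg)(x)-(fg)(x_t^{x_0})=g(x)\big(f(x)-f(x_t^{x_0})\big)+f(x_t^{x_0})\big(g(x)-g(x_t^{x_0})\big).
\]
Each bracket tends to $0$ as $x\to x_0$, so the product is ssc at $x_0$ as soon as the factors $g(x)$ and $f(x_t^{x_0})$ stay bounded on a neighborhood of $x_0$. This is exactly where I expect the main obstacle to be: unlike the finite-dimensional case, where by Dzagnidze's theorem strong separate continuity at a point coincides with continuity (hence local boundedness) at that point, an ssc function on $\ell_p$ need not be continuous at $x_0$. The key preliminary step is therefore a lemma asserting that an ssc function is locally bounded at every point. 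I would attack this by restricting to the finite-dimensional slices through $x_0$ (on which the finitely generated topology is the usual product topology and the ssc condition reduces to the classical one, forcing continuity and hence boundedness on each slice) and then upgrading slice-boundedness to boundedness on a whole neighborhood using local projective symmetry; controlling the passage from finitely many coordinates to a full neighborhood is the delicate part. Granting local boundedness, say $|g|\le M$ and $|f(x_t^{x_0})|\le M$ near $x_0$, the displayed estimate gives $|(fg)(x)-(fg)(x_t^{x_0})|\le M\,|f(x)-f(x_t^{x_0})|+M\,|g(x)-g(x_t^{x_0})|\to 0$.

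For (ii) I would run a routine $3\varepsilon$-argument. Put $S_N=\sum_{n=1}^N f_n$; by (i) each $S_N$ is ssc at $x_0$, and since every $x_t^{x_0}$ lies in $\sigma_1(x)\subseteq X$ the values involved are defined. Given $\varepsilon>0$, uniform convergence on all of $X$ supplies an $N$ with $|f(z)-S_N(z)|<\varepsilon/3$ for every $z\in X$; applying this at $z=x$ and at $z=x_t^{x_0}$ and inserting $S_N$ yields
\[
|f(x)-f(x_t^{x_0})|\le\frac{\varepsilon}{3}+|S_N(x)-S_N(x_t^{x_0})|+\frac{\varepsilon}{3},
\]
and the middle term is $<\varepsilon/3$ for $x$ close enough to $x_0$ because $S_N$ is ssc at $x_0$. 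Hence $f$ is ssc at $x_0$. The only thing to keep in mind is that the uniformity must hold on the whole space, so that the estimate also applies at the reset point $x_t^{x_0}$.

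For (iii) I would localize. Choose a neighborhood $W$ of $x_0$ meeting the supports of only finitely many $f_i$, and by local projective symmetry shrink it to a projectively symmetric (with respect to $x_0$) neighborhood $U\subseteq W$. Then for every $x\in U$ and every $t$ we also have $x_t^{x_0}\in U$, so on $U$ the sum $f$ agrees with the finite subsum $\sum_{j=1}^{m}f_{i_j}$ at both $x$ and $x_t^{x_0}$. Since strong separate continuity at $x_0$ is a purely local (limit) condition, it suffices that this finite subsum be ssc at $x_0$, which is part (i). This reduces (iii) to (i) and completes the plan; as noted, the single substantial difficulty is the local boundedness lemma underlying the product rule.
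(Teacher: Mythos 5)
The first thing to say is that the paper itself contains no proof of this statement: Theorem~\ref{th:operations-ssc} is imported by citation from \cite[Theorem 3.4]{KaMykh:MasloSSC}, so there is no in-paper argument to measure yours against. Judged on its own, most of your plan is sound. The triangle-inequality treatment of $f\pm g$ and $|f|$, the reduction of $\min$ and $\max$ to the identity $\max\{f,g\}=\tfrac12(f+g+|f-g|)$, the $3\varepsilon$-argument for (ii) (with the correct observation that uniformity on all of $X$ is needed so the tail estimate applies at the reset point $x_t^{x_0}$), and the localization in (iii) are all fine; in (iii) you invoke local projective symmetry exactly where it is needed, namely to ensure that $x\in U$ forces $x_t^{x_0}\in U$ so that $f$ and the finite subsum agree at both points entering the limit.

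The gap is in the product, and it is not merely an unproved lemma: the local boundedness statement on which your argument rests is false. A function that is constant on each $\mathcal S$-component of $\ell_p$ satisfies $f(x)=f(x_t^{x_0})$ identically (changing one coordinate never leaves the component), hence is ssc everywhere; but every $\mathcal S$-component is dense in $\ell_p$, so the constants can be chosen to make $f$ unbounded on every ball. Your slice-and-upgrade strategy cannot repair this, since such a function is constant on every finite-dimensional slice through $x_0$ while unbounded on every neighborhood — slice-boundedness simply does not propagate. Worse, with the definitions as stated in this paper the product clause itself fails. Define $F:\ell_p\to\mathbb R$ by $F(x)=\sum_{n=1}^\infty nx_n$ when this series converges absolutely and $F(x)=0$ otherwise. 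For fixed $t$ the points $x$ and $x_t^a$ always fall into the same case and $|F(x)-F(x_t^a)|\le t\,|x_t-a_t|\le t\,\|x-a\|_p$, so $F$ is ssc at every point with respect to every variable. Now let $x^{(n)}$ be the point whose first coordinate is $n^{-1/2}$, whose $n$-th coordinate is $n^{-1/4}$, and whose remaining coordinates vanish; then $x^{(n)}\to 0$ in $\ell_p$, while
\[
F^2\bigl(x^{(n)}\bigr)-F^2\bigl((x^{(n)})_1^0\bigr)=\bigl(n^{-1/2}+n^{3/4}\bigr)^2-\bigl(n^{3/4}\bigr)^2=n^{-1}+2n^{1/4}\longrightarrow\infty,
\]
so $F\cdot F$ is not ssc at $0$ with respect to the first variable. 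Consequently no argument can establish the product assertion in the generality stated here; either local boundedness must be added as a hypothesis (in which case your splitting goes through verbatim) or the precise formulation of Theorem 3.4 in \cite{KaMykh:MasloSSC} must be consulted, as it presumably differs from the transcription above.
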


It is worth noting that the quotient of two real-valued ssc-functions need not be an ssc-function~\cite{Karlova:BMZh:2014:SSC}.

\begin{definition}
{\rm Let $(X_t:t\in T)$ be a family of topological spaces $X_t$ and $a\in X_T$. A set $W\subseteq \sigma(a)$ is called {\it nearly open in $\sigma(a)$} if for any finite set $T_0\subseteq T$ the set
  $$
  W_{T_0}=\{x\in X_{T_0}: a_{T_0}^x\in W\}
  $$
is open in the space $X_{T_0}$ equipped with the topology of pointwise convergence.}
\end{definition}

\begin{definition}
{\rm Let $(X_t:t\in T)$ be a family of topological spaces $X_t$, $X\subseteq X_T$ be an $\mathcal S$-open set and $(\sigma_i:i\in I)$ be a partition of $X$ on $\mathcal S$-components. A set $W\subseteq X$ is called {\it nearly open in $X$} if $W\cap \sigma_i$ is nearly open in $\sigma_i$ for every $i\in I$. }
\end{definition}

\begin{definition}
  {\rm Let $X\subseteq X_T$ be an $\mathcal S$-open set. A set $H\subseteq X$ is called {\it nearly closed  in $X$} if the complement $X\setminus H$ is nearly open in $X$.}
\end{definition}

For a set $A\subseteq\sigma(a)$ we put
$$
\overline{A}^{\bullet}=\{x\in\sigma(a):  W\cap A\ne\emptyset \,\,\,\,\forall W - \mbox{\,\,nearly open}, x\in W\}.
$$
\begin{definition}
  {\rm A set $A\subseteq\sigma(a)$ is said to be {\it  super dense (p-dense)} in $\sigma(a)$ if $\overline{A}^{\bullet}=\sigma(a)$ (respectively, $\overline{A}^{p}=\sigma(a)$, where $\overline{A}^p$ stands for the closure of $A$ in the topology of pointwise conergence on $\sigma(a)$).}
\end{definition}

\begin{definition}
{\rm A subset $A$ of an $\mathcal S$-open set  $X=\bigcup\limits_{i\in I}\sigma_i\subseteq X_T$, where $(\sigma_i:i\in I)$ is a family of all $\mathcal S$-components of $X$, is called {\it super dense} in $X$ if for every $i\in I$ the set $A_i=A\cap\sigma_i$ is super dense in $\sigma_i$ whenever $A_i\ne\emptyset$.}
\end{definition}

Observe that $A$ is nearly closed if and only if $\overline{A}^{\bullet}=A$. Notice also that every super dense set is p-dense.

\begin{theorem}\cite[Theorem 1]{KaMatStud:2015}\label{th:preimage_of_ssc}
Let $(X_t)_{t\in T}$ be a family of topological spaces, $a\in X_T$,  $\tau$ be a topology on $\sigma(a)$ and $f:\sigma(a)\to \mathbb R$ be an ssc-function. Then $f^{-1}(V)$ in nearly open in $\sigma(a)$ for any open set $V\subseteq \mathbb R$.
\end{theorem}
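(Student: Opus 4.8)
The plan is to reduce the statement, via the definition of near openness, to a continuity assertion about the restrictions of $f$ to finite-dimensional slices through $a$, and then to establish that continuity by peeling off one coordinate at a time using strong separate continuity. Since the domain $\sigma(a)$ is a single $\mathcal S$-component, near openness of $f^{-1}(V)$ means exactly that for each finite $T_0\subseteq T$ the slice $(f^{-1}(V))_{T_0}$ is open in $X_{T_0}$ with the topology of pointwise convergence.

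First I would fix an open set $V\subseteq\mathbb R$ and a finite set $T_0=\{t_1,\dots,t_n\}\subseteq T$, and unwind the definitions: writing $g\colon X_{T_0}\to\mathbb R$, $g(x)=f(a_{T_0}^x)$, one checks $(f^{-1}(V))_{T_0}=g^{-1}(V)$, so the whole theorem amounts to the claim that $g^{-1}(V)$ is open in $X_{T_0}$ for every open $V$, that is, that $g$ is continuous. Since $\tau$ is finitely generated, the slice $X_{T_0}\times\prod_{t\in T\setminus T_0}\{a_t\}$ carries, under $\tau$, exactly the product topology transported from $X_{T_0}$, so $g$ is continuous if and only if the restriction of $f$ to this slice is $\tau$-continuous. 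Thus it suffices to prove the assertion: the restriction of an ssc-function to any finite-dimensional slice through $a$ is continuous.

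To prove this I would fix a point $b$ in the slice (so $b$ agrees with $a$, hence with every other slice point, off $T_0$) and take an arbitrary net $y_\lambda\to b$ inside the slice. The key device is the telescoping chain of intermediate points $b_\lambda^{(0)}=y_\lambda$ and $b_\lambda^{(k)}=(b_\lambda^{(k-1)})_{t_k}^b$, i.e.\ the point obtained from $y_\lambda$ by resetting the coordinates $t_1,\dots,t_k$ to their values in $b$. Because $y_\lambda$ already agrees with $b$ off $T_0$, the last point is $b_\lambda^{(n)}=b$, and one obtains the finite decomposition $f(y_\lambda)-f(b)=\sum_{k=1}^n\bigl(f(b_\lambda^{(k-1)})-f(b_\lambda^{(k)})\bigr)$. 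For each fixed $k$ the summand equals $f(b_\lambda^{(k-1)})-f((b_\lambda^{(k-1)})_{t_k}^b)$, which is precisely the quantity controlled by strong separate continuity with respect to $t_k$ at $b$; since the points $b_\lambda^{(k-1)}$ converge to $b$ (each of their coordinates is either constantly equal to that of $b$ or equal to the converging coordinate of $y_\lambda$), the defining limit in the ssc property forces every summand to tend to $0$. Summing the finitely many terms yields $f(y_\lambda)\to f(b)$, the desired continuity.

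The main obstacle, and the point requiring care, is the bookkeeping around the intermediate points $b_\lambda^{(k)}$: one must check that each lies in the slice (so that the finitely generated hypothesis legitimately identifies $\tau$-convergence there with pointwise convergence) and that $b_\lambda^{(k-1)}\to b$ in $\tau$, which is what allows the ssc limit — stated a priori for nets approaching $b$ throughout $\sigma(a)$ — to be applied along these particular nets. Once this is in place the argument is purely a telescoping estimate; the finiteness of $T_0$ is essential, since it bounds the number of terms and makes the sum of vanishing quantities vanish.
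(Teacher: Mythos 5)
The paper does not prove this statement itself---it is quoted from \cite[Theorem 1]{KaMatStud:2015}---so there is no in-text proof to compare against; judged on its own, your argument is correct and is the standard one for results of this type. The reduction of near openness to continuity of $x\mapsto f(a_{T_0}^x)$ on $X_{T_0}$ is exactly what the definitions give, and the telescoping chain $b_\lambda^{(k)}=(b_\lambda^{(k-1)})_{t_k}^b$ correctly converts strong separate continuity in each of the finitely many variables $t_1,\dots,t_n$ into joint continuity on the slice; you also rightly flag the two points that need the standing hypothesis that $\tau$ is finitely generated (identifying $\tau$-convergence on the slice with coordinatewise convergence, and noting that convergence in the subspace implies convergence in $(\sigma(a),\tau)$, so the ssc limit at $b$ may be applied along the nets $b_\lambda^{(k-1)}$).
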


\begin{proposition}\label{prop:superdense_is_determ}
  Let $(X_t:t\in T)$ be a family of topological spaces and $E$ be a super dense set in an $S$-open space $X\subseteq X_T$ equipped with a locally projectively symmetric  topology $\tau$. Then $E$ is determining set for the class ${\rm SSC}(X)$.
\end{proposition}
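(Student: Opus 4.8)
The plan is to reduce the statement to a single auxiliary function and then argue one $\mathcal S$-component at a time. Given $f,g\in\mathrm{SSC}(X)$ with $f|_E=g|_E$, I would set $h=f-g$. By Theorem~\ref{th:operations-ssc}(i) the difference $h$ is again strongly separately continuous on $X$, and by hypothesis $h|_E=0$; so it suffices to prove that an ssc-function $h\colon X\to\mathbb R$ vanishing on a super dense set vanishes identically. Since $X=\bigcup_{i\in I}\sigma_i$ is the disjoint union of its $\mathcal S$-components, each $\sigma_i$ being a $\sigma$-product $\sigma(a_i)$, and since the restriction $h|_{\sigma_i}$ is still strongly separately continuous at every point of $\sigma_i$ (the defining limit only becomes easier under passing to a subspace, and $x_t^{a}\in\sigma_i$ whenever $x\in\sigma_i$), it is enough to show $h|_{\sigma_i}\equiv 0$ for every $i\in I$.

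Fix a component $\sigma_i$ on which $E_i:=E\cap\sigma_i$ is super dense, so $\overline{E_i}^{\bullet}=\sigma_i$. Suppose, for contradiction, that $h(x_0)\ne 0$ for some $x_0\in\sigma_i$. Choose an open interval $V\subseteq\mathbb R$ with $h(x_0)\in V$ and $0\notin V$ (for instance the ball about $h(x_0)$ of radius $|h(x_0)|/2$). Applying Theorem~\ref{th:preimage_of_ssc} to $h|_{\sigma_i}$, the set $W:=(h|_{\sigma_i})^{-1}(V)$ is nearly open in $\sigma_i$ and contains $x_0$. On the other hand, since $h$ vanishes on $E_i$ while $0\notin V$, we get $W\cap E_i=\emptyset$. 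But $x_0\in\overline{E_i}^{\bullet}$ means precisely that every nearly open set containing $x_0$ meets $E_i$, contradicting $W\cap E_i=\emptyset$. Hence $h\equiv 0$ on every component met (super densely) by $E$.

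The step I expect to be the real obstacle is the treatment of components $\sigma_i$ with $E\cap\sigma_i=\emptyset$, which the bare reading of the super density definition allows. There the argument above yields nothing, and in fact it cannot: any function that is constant on each $\mathcal S$-component is automatically an ssc-function, because $x$ and $x_t^{a}$ always lie in the same component, so the defining difference $d(h(x),h(x_t^a))$ is identically $0$. Thus, if some component were missed by $E$, one could redefine $h$ to be a nonzero constant there without leaving $\mathrm{SSC}(X)$ and without disturbing $h|_E$, and $E$ would fail to be determining. Consequently the proof must invoke super density in the strong form that $E_i$ is super dense --- in particular nonempty --- in \emph{every} component $\sigma_i$; granting this, the component-wise conclusion of the preceding paragraph covers all of $X$, so $h\equiv 0$ and $f=g$. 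The remaining points --- that $h|_{\sigma_i}$ is ssc and that each $\sigma_i$ is genuinely a $\sigma$-product, so Theorem~\ref{th:preimage_of_ssc} applies --- are routine.
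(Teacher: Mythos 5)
Your proof is correct and follows essentially the same route as the paper's: both reduce to $h=f-g$, invoke Theorem~\ref{th:operations-ssc} to see that $h$ is ssc and Theorem~\ref{th:preimage_of_ssc} to control $h^{-1}$ of open sets, and then conclude component-wise from super density --- the paper merely phrases your nearly-open-set contradiction as the one-line inclusion $\sigma_i=\overline{E_i}^{\bullet}\subseteq\overline{H_i}^{\bullet}=H_i$ using that $H_i=h^{-1}(0)\cap\sigma_i$ is nearly closed. Your closing remark about components missed by $E$ is a genuine and worthwhile observation: the paper's own proof tacitly assumes $E\cap\sigma_i\ne\emptyset$ for every $i$ (it writes $\overline{E_i}^{\bullet}=\sigma_i$ without exception), even though the stated definition of super density for $\mathcal S$-open sets only imposes the condition ``whenever $A_i\ne\emptyset$,'' and your constant-on-components counterexample shows that this nonemptiness is indeed needed.
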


\begin{proof}
  Consider ssc-functions $f,g:X\to \mathbb R$ such that $f|_E=g|_E$ and let $(\sigma_i:i\in I)$ be a partition of $X$ on mutually disjoint $\mathcal S$-components.  Denote
  $$
  H=\{x\in X:f(x)-g(x)=0\}
  $$
  and for every $i\in I$ we put $E_i=E\cap\sigma_i$ and $H_i=H\cap \sigma_i$. Since the function $h(x)=f(x)-g(x)$ is strongly separately continuous on $X$ by Theorem~\ref{th:operations-ssc}, every set $H_i=(h^{-1}(0))\cap \sigma_i$ is nearly closed in $\sigma_i$ according to Theorem~\ref{th:preimage_of_ssc}. Then the inclusion $E_i\subseteq H_i$ implies that $\sigma_i=\overline{E_i}^{\bullet}\subseteq\overline{H_i}^{\bullet}=H_i$. Consequently, $H_i=\sigma_i$ for every $i\in I$. Therefore, $H=\bigcup\limits_{i\in I} H_i=\bigcup\limits_{i\in I}\sigma_i=X$.
\end{proof}

\begin{lemma}\label{lem:loc_comp}
  Let $(X_n)_{n=1}^\infty$ be a sequence of locally compact Hausdorff spaces, $a\in\prod\limits_{n=1}^\infty X_n$ and $W\subseteq \sigma(a)$ be a nearly open set. Then for every $x\in W$ there exists a sequence $(U_n)_{n=1}^\infty$ of functionally open sets $U_n\subseteq X_n$ such that $x\in\bigl(\prod\limits_{n=1}^\infty U_n\bigr)\cap\sigma(a)\subseteq W$.
\end{lemma}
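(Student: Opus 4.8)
The plan is to reduce the statement to a countable induction that builds the sides $U_n$ of the box one coordinate at a time, using local compactness to keep the construction from running away at infinity.

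First I would set up a reformulation. Let $F=\{n:x_n\ne a_n\}$, which is finite because $x\in\sigma(a)$. Since $\sigma(x)=\sigma(a)$ and, for every finite $T_0\supseteq F$, the maps $z\mapsto a_{T_0}^z$ and $z\mapsto x_{T_0}^z$ coincide, near-openness of $W$ does not depend on whether the base point is taken to be $a$ or $x$: openness of the slices over the cofinal family $\{T_0:T_0\supseteq F\}$ forces openness of all slices, because a slice of an open subset of a finite product (obtained by fixing some coordinates at $a_t$) is again open. Hence I may assume $x=a$; in particular $a\in W$. With $a_n\in U_n$ for every $n$, a point $y\in\sigma(a)$ differing from $a$ on a finite set $S$ lies in $\prod_n U_n$ exactly when $y_t\in U_t$ for $t\in S$, so the desired inclusion $(\prod_n U_n)\cap\sigma(a)\subseteq W$ is equivalent to $\prod_{t\in S}U_t\subseteq W_S$ for every finite $S$. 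Inserting the coordinate $a_t\in U_t$ for the indices of $\{1,\dots,m\}\setminus S$ shows it is enough to secure
\[
\textstyle\prod_{t=1}^m U_t\subseteq W_{\{1,\dots,m\}}\qquad\text{for every }m\in\mathbb{N}.
\]
Each $W_{\{1,\dots,m\}}$ is open in $X_1\times\cdots\times X_m$, contains $(a_1,\dots,a_m)$, and is the slice of $W_{\{1,\dots,m+1\}}$ at the point $a_{m+1}$ in the last coordinate.

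Next comes the construction, the heart of the argument. A locally compact Hausdorff space is Tychonoff, so every point has a base of functionally open neighborhoods with compact closure contained in any prescribed open set (threshold a Urysohn function inside a compact neighborhood). I would build $U_n$ by induction on $m$ so as to maintain the stronger invariant $\prod_{t=1}^m\overline{U_t}\subseteq W_{\{1,\dots,m\}}$ with each $\overline{U_t}$ compact. For $m=1$ pick functionally open $U_1\ni a_1$ with $\overline{U_1}$ compact and $\overline{U_1}\subseteq W_{\{1\}}$. Assuming $U_1,\dots,U_m$ chosen, set $K=\prod_{t=1}^m\overline{U_t}$, a compact subset of $W_{\{1,\dots,m\}}$. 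By the slice compatibility, $K\times\{a_{m+1}\}\subseteq W_{\{1,\dots,m+1\}}$; a tube-lemma argument (valid because $K$ is compact) gives an open neighborhood $M$ of $a_{m+1}$ with $K\times M\subseteq W_{\{1,\dots,m+1\}}$, and shrinking $M$ yields functionally open $U_{m+1}\ni a_{m+1}$ with $\overline{U_{m+1}}\subseteq M$ compact. Then $\prod_{t=1}^{m+1}\overline{U_t}=K\times\overline{U_{m+1}}\subseteq W_{\{1,\dots,m+1\}}$, completing the step.

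I expect the main obstacle to be exactly this simultaneous, over-all-coordinates choice: the box condition over $\{1,\dots,m+1\}$ could in principle force a re-shrinking of the earlier sides $U_1,\dots,U_m$, threatening an infinite regress. Carrying the compact closures $\overline{U_t}$ through the induction and invoking the tube lemma is what breaks this dependence, and local compactness enters precisely—and only—here. Once the $U_n$ are produced, the invariant gives $\prod_{t=1}^m U_t\subseteq W_{\{1,\dots,m\}}$ for all $m$, hence $\prod_{t\in S}U_t\subseteq W_S$ for every finite $S$, and therefore $(\prod_n U_n)\cap\sigma(a)\subseteq W$; since $a=x$ we also have $x\in\prod_n U_n$, which is the assertion.
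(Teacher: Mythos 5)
Your proof is correct and follows essentially the same route as the paper's: an inductive construction of functionally open sets $U_n$ with compact closures maintaining $\prod_{t=1}^m\overline{U_t}\subseteq W_{\{1,\dots,m\}}$, where local compactness and a tube-lemma/finite-subcover argument at each step prevent any re-shrinking of earlier coordinates. The only cosmetic difference is that you first reduce to the case $x=a$, whereas the paper simply chooses the first $N$ neighborhoods (where $x_n\ne a_n$ may occur) in one step and then continues around $a_{n}$ for $n>N$.
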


\begin{proof}
For every $n\in\mathbb N$ we denote $Y_n=\prod\limits_{k=1}^n X_k$ and $W_n=W_{\{1,\dots,n\}}=\{x\in Y_n: a_{\{1,\dots,n\}}^x\in W\}$. Take a number $N$ such that $x_n=a_n$ for all $n>N$.
Notice that $W_n$ is an open subset of the locally compact Hausdorff (and, consequently, completely regular) space $Y_n$. Therefore, for every $n=1,\dots,N$ there exists a functionally open neighborhood $U_n$ of $x_n$ with compact closure such that
$K_1=\prod\limits_{n=1}^N \overline{U_n}\subseteq W_N$. For every $x\in K_1$ we take a functionally open neighborhood $V_x\times G_x$ of $(x_1,
\dots,x_N,a_{N+1})$ with compact closure in $Y_N\times X_{N+1}$ such that $(x_1,
\dots,x_N,a_{N+1})\in V_x\times G_x\subseteq W_{N+1}$. Since the set $K_1\times \{a_{N+1}\}$ is compact, there exists a finite set $I\subseteq K_1$ such that $K_1\times\{a_{N+1}\}\subseteq \bigcup\limits_{x\in I} (V_x\times G_x)$. Put $U_{N+1}=\bigcap\limits_{x\in I} G_x$. Then $U_{N+1}$ is a functionally open neighborhood of $a_{N+1}$ in $X_{N+1}$ and $K_2=K_1\times\overline{U_{N+1}}\subseteq W_{N+1}$. Proceeding inductively in this way we obtain a sequence $(U_n)_{n=1}^\infty$ of functionally open sets $U_n\subseteq X_n$ with $x\in\bigl(\prod\limits_{n=1}^\infty U_n\bigr)\cap\sigma(a)\subseteq W$.
\end{proof}

The following result follows from \cite[Lemma 2]{KaMatStud:2015}.
\begin{proposition}\label{prop:base_set}
  Let $(X_n)_{n=1}^\infty$ be a sequence of topological  spaces, $a\in \prod\limits_{n=1}^\infty X_n$, $(U_n)_{n=1}^\infty$ be a sequence of functionally open sets $U_n\subseteq X_n$,  $W=\bigl(\prod\limits_{n=1}^\infty U_n\bigr)\cap\sigma(a)$ and let $\tau$ be the topology of pointwise convergence on $\sigma(a)$. Then there exists an ssc-function $f:(\sigma(a),\tau)\to [0,1]$ such that $W=f^{-1}((0,1])$.
\end{proposition}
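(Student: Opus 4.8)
The plan is to realize $f$ as an infinite product $f(x)=\prod_{n=1}^{\infty}\phi_n(x_n)$ of coordinatewise continuous $[0,1]$-valued functions $\phi_n\colon X_n\to[0,1]$, arranged so that the product is positive exactly on $W$. The reason this shape is forced upon us is that such products are automatically strongly separately continuous: if $x\in\sigma(a)$ and $x_t^b$ differ only in the $t$-th coordinate, then, factoring out the common part, $f(x)-f(x_t^b)=\bigl(\prod_{n\ne t}\phi_n(x_n)\bigr)\bigl(\phi_t(x_t)-\phi_t(b_t)\bigr)$, where the first factor lies in $[0,1]$ and the second tends to $0$ as $x\to b$ in the pointwise topology, by continuity of $\phi_t$. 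So once the product is well defined, the ssc property is essentially free.

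First I would use the functional openness of each $U_n$ to fix a continuous $g_n\colon X_n\to[0,1]$ with $U_n=g_n^{-1}((0,1])$. The genuine difficulty is the \emph{tail}: for $x\in\sigma(a)$ all but finitely many coordinates equal $a_n$, so the product retains infinitely many factors $g_n(a_n)$, and even when every $a_n\in U_n$ the infinite product $\prod_n g_n(a_n)$ may collapse to $0$, wrongly excluding $a$ from the support. To repair this I would renormalize, setting $\phi_n(t)=\min\{1,\,g_n(t)/g_n(a_n)\}$ when $a_n\in U_n$ (so $g_n(a_n)>0$) and $\phi_n(t)=g_n(t)$ when $a_n\notin U_n$. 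Each $\phi_n$ is then continuous with values in $[0,1]$, still satisfies $\phi_n^{-1}((0,1])=U_n$, and crucially $\phi_n(a_n)\in\{0,1\}$, equalling $1$ precisely when $a_n\in U_n$.

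With this choice the product becomes effectively finite on $\sigma(a)$: writing $D(x)=\{n:x_n\ne a_n\}$, the tail $\prod_{n\notin D(x)}\phi_n(a_n)$ is a product of $0$'s and $1$'s, hence equals $1$ when $a_n\in U_n$ for every $n\notin D(x)$ and $0$ otherwise. Thus $f$ is well defined with values in $[0,1]$, and a short case analysis — splitting according to whether $\{n:a_n\notin U_n\}\subseteq D(x)$, which simultaneously disposes of the degenerate case $W=\varnothing$ — shows that $f(x)>0$ if and only if $x_n\in U_n$ for all $n$, that is, $f^{-1}((0,1])=W$.

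Finally I would confirm strong separate continuity at an arbitrary $b\in\sigma(a)$ with respect to each variable $t$ directly from the displayed identity, bounding $\prod_{n\ne t}\phi_n(x_n)$ by $1$ and using continuity of $\phi_t$ together with $x_t\to b_t$. The one real obstacle is the tail normalization; after $\phi_n(a_n)\in\{0,1\}$ has been arranged, convergence of the product, the support computation $f^{-1}((0,1])=W$, and the ssc conclusion are all routine consequences of the fact that altering a single coordinate perturbs only one bounded factor.
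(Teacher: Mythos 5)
Your construction is correct. One point of comparison: the paper does not prove this proposition at all --- it simply states that it ``follows from [Lemma 2]'' of the cited paper \emph{Mat.\ Stud.\ 43 (2015)} --- so you have supplied a self-contained argument where the authors defer to a reference. The argument itself is sound: writing $U_n=g_n^{-1}((0,1])$ for continuous $g_n:X_n\to[0,1]$ and renormalizing to $\phi_n$ with $\phi_n(a_n)\in\{0,1\}$ is exactly the right move, since it makes the tail of $\prod_n\phi_n(x_n)$ a product of $0$'s and $1$'s for every $x\in\sigma(a)$ and hence makes $f$ well defined with $f^{-1}((0,1])=W$; and the factorization $f(x)-f(x_t^b)=\bigl(\prod_{n\ne t}\phi_n(x_n)\bigr)\bigl(\phi_t(x_t)-\phi_t(b_t)\bigr)$, with the first factor bounded by $1$, gives strong separate continuity with respect to the topology of pointwise convergence directly from continuity of $\phi_t$ at $b_t$. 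The only caveats are cosmetic: your ``case analysis'' on whether $\{n:a_n\notin U_n\}\subseteq D(x)$ should note that this set may be infinite (in which case $f\equiv 0$ and $W=\emptyset$, consistently), and the factorization of the infinite product should be justified by monotonicity of the partial products --- both are one-line observations.
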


\begin{proposition}\label{prop:determ_is_superdense_count}
  Let $X\subseteq \prod\limits_{n=1}^\infty X_n$ be an $\mathcal S$-open subset of the product of  a sequence of locally compact Hausdorff spaces $X_n$, $\mathcal T$ is a topology on $X$ which is finer than the topology $\tau$ of pointwise convergence and  $E\subseteq X$ be a determining set for the class  ${\rm SSC}(X)$. Then $E$ is super dense in $X$.
\end{proposition}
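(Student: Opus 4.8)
The plan is to argue by contraposition: assuming that $E$ fails to be super dense in $X$, I will produce two ssc-functions on $X$ which coincide on $E$ but differ at some point, so that $E$ cannot be determining. By the definition of super density, the failure means that there is an $\mathcal S$-component $\sigma(a)$ of $X$ for which $A:=E\cap\sigma(a)\neq\emptyset$ and yet $\overline{A}^{\bullet}\neq\sigma(a)$. Fix a point $x^*\in\sigma(a)\setminus\overline{A}^{\bullet}$; by the definition of $\overline{A}^{\bullet}$ there is then a nearly open set $W\subseteq\sigma(a)$ with $x^*\in W$ and $W\cap A=\emptyset$. Note also that $A\subseteq\overline{A}^{\bullet}$, so $x^*\notin E$.

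Next I would extract a basic neighborhood inside $W$. Since each $X_n$ is locally compact Hausdorff, Lemma~\ref{lem:loc_comp} applied to $W$ and $x^*$ yields functionally open sets $U_n\subseteq X_n$ with $x^*\in W_0:=\bigl(\prod_{n=1}^\infty U_n\bigr)\cap\sigma(a)\subseteq W$; in particular $W_0\cap A=\emptyset$. Proposition~\ref{prop:base_set} then furnishes an ssc-function $h\colon(\sigma(a),\tau)\to[0,1]$ with $h^{-1}((0,1])=W_0$, where $\tau$ is the topology of pointwise convergence. Thus $h(x^*)>0$ while $h\equiv 0$ on $A$. Because the prescribed topology $\mathcal T$ is finer than $\tau$, every $\mathcal T$-convergent net is $\tau$-convergent, so $h$ remains strongly separately continuous when $\sigma(a)$ carries $\mathcal T$.

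I would then extend $h$ by zero, setting $f(x)=h(x)$ for $x\in\sigma(a)$ and $f(x)=0$ for $x\in X\setminus\sigma(a)$, and put $g\equiv 0$. Since $h$ vanishes on $A$ and $f$ vanishes off $\sigma(a)$, we get $f|_E=0=g|_E$, whereas $f(x^*)=h(x^*)>0=g(x^*)$, so $f\neq g$. It remains to check that $f\in{\rm SSC}(X)$, since $g$ obviously is. The decisive structural remark is that $x$ and $x_t^{a'}$ differ only in the $t$-th coordinate and hence always lie in the same $\mathcal S$-component. Consequently, for any $a'\in X$, any variable $t$ and any $x$, the difference $f(x)-f(x_t^{a'})$ vanishes identically whenever $x\notin\sigma(a)$ (both values are $0$), and equals $h(x)-h(x_t^{a'})$ when $x\in\sigma(a)$. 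For $a'\in\sigma(a)$ the contribution from $x\in\sigma(a)$ tends to $0$ by the strong separate continuity of $h$ at $a'$, and the contribution from $x\notin\sigma(a)$ is identically $0$, so $f$ is ssc at $a'$.

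The main obstacle is strong separate continuity of $f$ at points $a'\notin\sigma(a)$, where the ssc property of $h$ at $a'$ is unavailable since $a'\notin\sigma(a)$. Here I would use the explicit product structure behind Proposition~\ref{prop:base_set}: writing $U_n=\varphi_n^{-1}((0,1])$ for continuous $\varphi_n\colon X_n\to[0,1]$, normalized so that $h(x)=\prod_{n}\varphi_n(x_n)$ on $\sigma(a)$ (a cofinitely constant product there), one obtains for $x\in\sigma(a)$ the identity $h(x)-h(x_t^{a'})=\bigl(\varphi_t(x_t)-\varphi_t(a'_t)\bigr)\prod_{n\neq t}\varphi_n(x_n)$, whence $|h(x)-h(x_t^{a'})|\le|\varphi_t(x_t)-\varphi_t(a'_t)|$. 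As $x\to a'$ in $\mathcal T$ we have $x_t\to a'_t$, and continuity of $\varphi_t$ forces this bound to tend to $0$; together with the vanishing contribution from $x\notin\sigma(a)$ this gives strong separate continuity of $f$ at $a'$. Hence $f,g\in{\rm SSC}(X)$ coincide on $E$ but are distinct, contradicting that $E$ is determining, and therefore $E$ must be super dense in $X$.
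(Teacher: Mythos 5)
Your proof is correct and follows the paper's own route: assume $E$ is not super dense, use Lemma~\ref{lem:loc_comp} to place a basic set $\bigl(\prod_n U_n\bigr)\cap\sigma(a)$ inside a nearly open set missing $E\cap\sigma(a)$, invoke Proposition~\ref{prop:base_set} to get an ssc-function positive there and zero on $E\cap\sigma(a)$, and note that $\tau\subseteq\mathcal T$ preserves strong separate continuity. The one place you go beyond the paper is in explicitly extending the function by zero to all of $X$ and verifying its strong separate continuity at points outside $\sigma(a)$ via the product structure of $h$; the paper compresses this into the bare assertion that $E\cap\sigma_i$ is determining in $\sigma_i$, so your extra care actually fills in a step the paper leaves implicit.
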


\begin{proof} Consider a partition $(\sigma_i:i\in I)$ of $X$ on mutually disjoint $\mathcal S$-open components. Assume that $E$ is not super dense in $X$. Then there exists $i\in I$ such that $\emptyset\ne \overline{E\cap\sigma_i}^{\bullet}\ne\sigma_i$. Since $W=\sigma_i\setminus \overline{E\cap\sigma_i}^{\bullet}$ is a nonempty nearly open set in $\sigma_i$, by Lemma \ref{lem:loc_comp} there exists a sequence $(U_n)_{n=1}^\infty$ of functionally open sets $U_n\subseteq X_n$ such that $G=\bigl(\prod\limits_{n=1}^\infty U_n\bigr)\cap\sigma_i\subseteq W$. According to Proposition \ref{prop:base_set} there exists an ssc-function $f:(\sigma_i,\tau)\to [0,1]$ such that $G=f^{-1}((0,1])$. Since $\tau\subseteq\mathcal T$, $f$ is strongly separately continuous on $(\sigma_i,\mathcal T)$. Notice that $f|_{E_\cap\sigma_i}=0$ and $f(x)>0$ for every $x\in G$, which implies a contradiction, since $E\cap \sigma_i$ is determining in $\sigma_i$.
\end{proof}

Since every space $\ell_p$ is an $S$-open subset of a countable product $\mathbb R^{\omega}$ and the standard topology on $\ell_p$ is finer than the topology of pointwise convergence, Propositions~\ref{prop:superdense_is_determ} and \ref{prop:determ_is_superdense_count} immediately imply the following result.

\begin{theorem}
For $p\in [1,+\infty)$ a set $E\subseteq\ell_p$ is determining for the class ${\rm SSC}(\ell_p)$ if and only if $E$ is super dense in $\ell_p$.
\end{theorem}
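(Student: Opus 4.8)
The plan is to obtain the equivalence as an immediate consequence of Propositions~\ref{prop:superdense_is_determ} and~\ref{prop:determ_is_superdense_count}, once I have verified that $\ell_p$, viewed as a subset of the product $\mathbb R^\omega=\prod_{n=1}^\infty\mathbb R$, meets the hypotheses of both. I would first record the structural facts shared by the two directions. The factors $X_n=\mathbb R$ are locally compact and Hausdorff, so $\mathbb R^\omega$ is a product of the required kind. Moreover, $\ell_p$ is $\mathcal S$-open in $\mathbb R^\omega$: if $x\in\ell_p$ and $y\in\sigma_1(x)$, then $y$ and $x$ differ in at most one coordinate, so $\sum_{n=1}^\infty|y_n|^p$ differs from $\sum_{n=1}^\infty|x_n|^p$ only in a single summand and thus stays finite, giving $\sigma_1(x)\subseteq\ell_p$. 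Consequently the $\mathcal S$-components of $\ell_p$ are exactly the sets $\sigma(a)$ with $a\in\ell_p$, two sequences lying in the same component precisely when they differ in finitely many coordinates. Finally, the norm topology $\mathcal T$ on $\ell_p$ is finer than the topology $\tau$ of pointwise convergence, since $\|x^{(k)}-x\|_p\to 0$ forces $x^{(k)}_n\to x_n$ for each $n$.

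For the sufficiency (``if'') direction I would apply Proposition~\ref{prop:superdense_is_determ} with $X=\ell_p$ carrying its norm topology. The remaining hypothesis, that $(\ell_p,\mathcal T)$ is locally projectively symmetric, was noted in the remarks preceding Theorem~\ref{th:operations-ssc}; with these data the proposition asserts precisely that a super dense set $E$ is determining for ${\rm SSC}(\ell_p)$.

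For the necessity (``only if'') direction I would apply Proposition~\ref{prop:determ_is_superdense_count} with $X=\ell_p$, the finer topology $\mathcal T$ taken to be the norm topology, and $\tau$ the topology of pointwise convergence. Since the factors are locally compact Hausdorff and $\tau\subseteq\mathcal T$ by the facts above, the proposition concludes that any determining $E$ is super dense in $\ell_p$.

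I do not anticipate a genuine obstacle: the mathematical content lies entirely in the two propositions, and the task is simply to fit their hypotheses to the concrete space $\ell_p$. The only verification worth spelling out is the $\mathcal S$-openness of $\ell_p$ and the resulting description of its $\mathcal S$-components; the local projective symmetry of the norm topology is quoted from the earlier remarks, and the comparison $\tau\subseteq\mathcal T$ is routine.
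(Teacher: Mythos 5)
Your proposal matches the paper's own argument: the paper derives the theorem immediately from Propositions~\ref{prop:superdense_is_determ} and~\ref{prop:determ_is_superdense_count}, noting exactly the facts you verify --- that $\ell_p$ is $\mathcal S$-open in the countable product $\mathbb R^\omega$ of locally compact Hausdorff factors and that the norm topology is finer than the topology of pointwise convergence. Your write-up simply spells out these hypothesis checks in slightly more detail, which is correct and unobjectionable.
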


\section{Baire classification of ssc-functions on $\ell_p$}
For $p\in [1,+\infty)$ and $x=(x_n)_{n=1}^\infty,y\in\ell_p$ we denote
\begin{gather*}
\|x\|_p=\Bigl(\sum\limits_{n=1}^\infty |x_n|^p\Bigr)^{\frac 1p}\quad\mbox{and}\quad d_p(x,y)=\|x-y\|_p.
\end{gather*}

Let $\pi_n(x)=x_n$ for every $x=(x_n)_{n=1}^\infty\in\ell_p$ and $n\in\mathbb N$.

\begin{lemma}\label{lem:univ_alpha}
  For any $\alpha\in [1,\omega_1)$ and $p\in[1,+\infty)$ there exists an $\mathcal S$-open set $E$ in $\ell_p$ such that $E$ belongs to the $\alpha$'th additive class and does not belong to the $\alpha$'th multiplicative class.
\end{lemma}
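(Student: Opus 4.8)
The plan is to reduce the statement to a purely combinatorial construction in the Cantor cube $2^\omega$ and then transport it into $\ell_p$ through the continuous coordinate functionals $\pi_n$. The guiding observation is that, as noted after the definition of an $\mathcal S$-open set, a subset $E\subseteq\ell_p$ is $\mathcal S$-open precisely when it is invariant under changing finitely many coordinates, i.e. when it is a union of the cosets $a+c_{00}$ (the $\sigma$-products $\sigma(a)$), where $c_{00}$ denotes the finitely supported sequences. So the real task is to produce a \emph{tail-invariant} set of exact additive class $\alpha$, and tail-invariance is exactly what a coordinatewise, pattern-of-signs description will guarantee for free. Throughout I use the metric-space convention under which additive class $1$ means $F_\sigma$ and multiplicative class $1$ means $G_\delta$, so that open and closed sets already lie in the ambiguous class $1$.

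First I would build, by transfinite recursion on $\alpha\in[1,\omega_1)$, a tail-invariant set $C_\alpha\subseteq 2^\omega$ that is of additive class $\alpha$, is not of multiplicative class $\alpha$, and is given by an explicit formula in normal form over the clopen atoms $\{z:z_i=1\}$ and $\{z:z_i=0\}$. For the base $\alpha=1$ take the classical $F_\sigma$-complete tail set
\[
C_1=\{z:z_i=1 \text{ for only finitely many } i\}=\bigcup_{N}\bigcap_{i\ge N}\{z:z_i=0\}.
\]
At a successor step, writing $2^\omega\cong(2^\omega)^\omega$ with blocks $z=(z^{(k)})_k$ and letting $P_\alpha=2^\omega\setminus C_\alpha$ (multiplicative class $\alpha$), put
\[
C_{\alpha+1}=\{z:\ z^{(k)}\in P_\alpha\ \text{for all but finitely many } k\},
\]
which is of additive class $\alpha+1$ and tail-invariant; it is hard for additive class $\alpha+1$ because any such set is an increasing union $\bigcup_m F_m$ of multiplicative-class-$\alpha$ sets, and reducing each $F_k$ continuously to $P_\alpha$ in the $k$-th block sends it into $C_{\alpha+1}$, using $x\in\bigcup_mF_m\iff x\in F_k$ for all large $k$. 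At a limit $\lambda$, fix $\alpha_k\nearrow\lambda$, pass to a double array $z=(z^{(k,m)})_{k,m}$, and set
\[
C_\lambda=\{z:\ \exists k\ \text{with}\ z^{(k,m)}\in P_{\alpha_k}\ \text{for all but finitely many } m\}.
\]
Each inner condition is of additive class $\alpha_k+1<\lambda$, so the countable union over $k$ lands in additive class $\lambda$; it is tail-invariant since a finite change of bits can neither destroy a ``cofinitely often'' witness nor create one; and it is hard for additive class $\lambda$ via the constant-row map $w\mapsto z$, $z^{(k,m)}=w^{(k)}$, which sends the canonical complete set $\{w:\exists k\ w^{(k)}\in P_{\alpha_k}\}$ onto $C_\lambda$.

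Then I would transfer $C_\alpha$ to $\ell_p$ by replacing, in its defining formula, every atom $\{z_i=1\}$ by the open half-space $\{x\in\ell_p:x_i>0\}$ and every atom $\{z_i=0\}$ by the closed set $\{x\in\ell_p:x_i\le 0\}$, obtaining a set $E\subseteq\ell_p$. Since each atom is open, respectively closed (hence in the ambiguous class $1$) and the Boolean/countable structure of the formula is unchanged, $E$ is of additive class $\alpha$; the off-by-one between clopen and open/closed atoms is absorbed by the convention above. Because membership in $E$ depends only on which coordinates $x_i$ are positive and $C_\alpha$ is tail-invariant, $E$ is invariant under finite coordinate changes, hence $\mathcal S$-open. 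For the lower bound a single continuous reduction works for all $\alpha$: fix $c_i>0$ with $\sum_i c_i^p<\infty$ and define $H:2^\omega\to\ell_p$, $H(z)=(c_iz_i)_i$. Then $H$ is continuous, $H(z)_i>0\iff z_i=1$ and $H(z)_i\le 0\iff z_i=0$, so every atom matches and $H^{-1}(E)=C_\alpha$. As continuous preimages do not raise Borel complexity, $E$ being of multiplicative class $\alpha$ would force $C_\alpha=H^{-1}(E)$ to be so as well, a contradiction; hence $E$ is not of multiplicative class $\alpha$.

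The routine parts are the transfer step, the tail-invariance $\Rightarrow$ $\mathcal S$-openness implication, and the reduction $H$. The genuine work, and the main obstacle, is the limit stage of the recursion: one must verify simultaneously that the ``$\exists$ row with cofinitely many successes'' combinator lands \emph{exactly} in additive class $\lambda$ (neither lower, nor in multiplicative class $\lambda+1$, which the naive ``for all but finitely many'' or ``for infinitely many'' combinators produce) and that it remains hard for additive class $\lambda$. A secondary point needing care is aligning the classical additive/multiplicative indexing with the modern $\Sigma^0_\xi/\Pi^0_\xi$ scale, so that the base case and the atom translation match the convention under which additive class $1$ is $F_\sigma$.
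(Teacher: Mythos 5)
Your proposal is essentially correct, but it reaches the conclusion by a genuinely different route than the paper. You do the combinatorial work in the Cantor cube: a transfinite recursion producing tail-invariant sets $C_\alpha\subseteq 2^\omega$ that are hard (via continuous, Wadge-style reductions) for the $\alpha$'th additive class, and you then transport them into $\ell_p$ through the sign-pattern map and pull them back along the continuous section $H(z)=(c_iz_i)_i$. The paper instead runs the recursion directly inside $\mathbb R^\omega$ and $\ell_p$ (its base set is the set of eventually-zero sequences, the analogue of your $C_1$, and its successor/limit combinators over a partition $\mathbb N=\bigcup_n T_n$ mirror your block constructions), and --- this is the main divergence --- it obtains the lower bound not from hardness plus the hierarchy theorem but from a bespoke universality statement: every set of additive (multiplicative) class $\alpha$ in $\ell_p$ is the preimage of $A_\alpha$ (of $B_\alpha$) under a contraction with Lipschitz constant $\frac12$; if $A_\alpha$ were of multiplicative class $\alpha$ one could take $C=A_\alpha$, obtain a contraction $f$ with $A_\alpha=f^{-1}(B_\alpha)$, and reach a contradiction at the Banach fixed point of $f$. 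Your approach is more modular and outsources the hard part to textbook descriptive set theory on $2^\omega$; but note that your step from ``$C_\alpha$ is hard for the additive class $\alpha$'' to ``$C_\alpha$ is not of multiplicative class $\alpha$'' silently invokes the non-collapse of the Borel hierarchy on $2^\omega$, a classical theorem that is itself proved by a universal-set diagonalization --- exactly the ingredient the paper's fixed-point trick replaces, so the paper's argument is self-contained where yours relies on cited background. The remaining pieces of your proof (tail-invariance giving $\mathcal S$-openness, the atom-by-atom translation using that open and closed sets are both of ambiguous class $1$ in a metric space, and the continuity of $H$ into the norm topology of $\ell_p$) are all sound, as is your identification of the limit stage as the place where the exact-class and hardness verifications require the most care.
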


\begin{proof} Fix $p\in[1,+\infty)$.

We define inductively sequences $(\tilde A_\alpha)_{1\le\alpha<\omega_1}$ and $(\tilde B_\alpha)_{1\le\alpha<\omega_1}$ of subsets of $\mathbb R^\omega$ in the following way. Put
\begin{gather*}
\tilde A_1=\{x=(x_n)_{n=1}^\infty\in \mathbb R^\omega: \exists m\,\,\forall n\ge m\,\,\, x_n=0\}\quad\mbox{and}\quad B_1=\mathbb R^\omega\setminus A_1.
\end{gather*}
Let $\mathbb N=\bigcup\limits_{n=1}^\infty T_n$ be a union of a sequence of mutually disjoint infinite sets
$T_n=\{t_{n1},t_{n2},\dots\}$, where $(t_{nm})_{m=1}^\infty$ is a strictly increasing sequence of numbers $t_{nm}\in\mathbb N$.
For every $n\in\mathbb N$ we denote by $\tilde A_1^n$ /$\tilde B_1^n$/ the copy of the set $\tilde A_1$ /$\tilde B_1$/, which is contained in the
space $\mathbb R^{T_n}$.
Assume that for some $\alpha\ge 1$ we have  already defined sequences $(\tilde A_\beta)_{1\le\beta<\alpha}$ and  $(\tilde B_\beta)_{1\le\beta<\alpha}$ (and their copies $(\tilde A_\beta^n)_{1\le\beta<\alpha}$ and  $(\tilde B_\beta^n)_{1\le\beta<\alpha}$ in $\mathbb R^{T_n}$) of subsets of $\mathbb R^\omega$. Now we put
\begin{gather*}
 \tilde A_\alpha=\left\{\begin{array}{ll}
                    \bigcup\limits_{m=1}^\infty \bigcap\limits_{n=m}^\infty \pi_{T_n}^{-1}(\tilde B_\beta^n), & \alpha=\beta+1, \\
                    \bigcup\limits_{n=1}^\infty \pi_{T_n}^{-1}(\tilde A_{\beta_n}^n), & \alpha=\sup\beta_n,
                  \end{array}
  \right.\\
  \tilde B_\alpha=\mathbb R^\omega\setminus \tilde A_\alpha.\phantom{aaaaaaaaaaaaaaaaaaaaaaa}
\end{gather*}
Let for every $\alpha\in [1,\omega_1)$
\begin{gather*}
  A_\alpha=\tilde A_\alpha\cap\ell_p, \,\,\, B_\alpha=\tilde B_\alpha\cap\ell_p,\\
  A_\alpha^n=\tilde A_\alpha^n\cap\ell_p(\mathbb R^{T_n}), \,\,\, B_\alpha^n=\tilde B_\alpha\cap\ell_p(\mathbb R^{T_n}).
\end{gather*}
{\sc Claim 1.} {\it For every $\alpha\in[1,\omega_1)$ the sets $A_\alpha$ and $B_\alpha$ are $\mathcal S$-open in $\ell_p$.}

{\it Proof of Claim 1.} Evidently, $A_1$ and $B_1$ are $\mathcal S$-open. Assume that for some $\alpha<\omega_1$ the claim is valid for all $\beta<\alpha$. Let $\alpha=\beta+1$ be an isolated ordinal. Take any $x\in A_\alpha$ and $y\in\sigma_1(x)$. Then  there exists $m\in\mathbb N$ such that $\pi_{T_n}(x)\in B_\beta^n$ for all $n\ge m$. Since $\pi_{T_n}(y)\in\sigma_1(\pi_{T_n}(x))$  and $B_\beta^n$ is $\mathcal S$-open, $\pi_{T_n}(y)\in B_\beta^n$. Therefore, $y\in A_\alpha$. We argue similarly in the case where $\alpha$ is a limit ordinal.\hfill$\Box$

Consider the equivalent metric
$$
d(x,y)=\min\{d_p(x,y),1\}
$$
on the space $\ell_p$.

{\sc Claim 2.} {\it For every $\alpha\in [1,\omega_1)$ the following condition holds:
\begin{itemize}
  \item[$(*)$] for every set $C\subseteq (\ell_p,d)$ of the additive /multiplicative/ class $\alpha$ there exists a contracting mapping $f:(\ell_p,d)\to (\ell_p,d)$ with the Lipschitz constant $q=\frac 12$ such that
      \begin{gather}
        C=f^{-1}(A_\alpha) \quad /C=f^{-1}(B_\alpha)/,\\
        |\pi_n(f(x))|\le 1\quad \forall x\in\ell_p\,\,\,\forall n\in\mathbb N.
      \end{gather}
\end{itemize}}

{\it Proof of Claim 2.}  We will argue by the induction on $\alpha$. Let $\alpha=1$ and $C$ be an arbitrary $F_\sigma$-subset of $(\ell_p,d)$. Then $C=\bigcup\limits_{n=1}^\infty C_n$ is a union of an increasing sequence of closed sets $C_n\subseteq (\ell_p,d)$. For every $x\in\ell_p$ we put
$$
f(x)=\bigl(\frac 13 d(x,C_1),\dots,\frac{1}{3^n}d(x,C_n),\dots\bigr).
$$
Since every $d(x,C_n)\le 1$, $f(x)\in\ell_p$ for every $x\in\ell_p$. Show that $C=f^{-1}(A_1)$. Take $x\in C$ and choose $m\in\mathbb N$ such that $x\in C_n$ for all $n\ge m$. Then $d(x,C_n)=0$ and $\pi_n(f(x))=0$ for all $n\ge m$. Hence, $x\in f^{-1}(A_1)$. The inverse inclusion follows from the closedness of $C_n$. Since
$$
d(f(x),f(y))\le d_p(f(x),f(y))=\Bigl(\sum\limits_{n=1}^\infty \frac{1}{3^{np}}|d(x,C_n)-d(y,C_n)|^p\Bigr)^{\frac 1p}\le d(x,y)\Bigl(\sum\limits_{n=1}^\infty \frac{1}{3^{np}}\Bigr)^{\frac 1p}\le\frac 12 d(x,y)
$$
for all $x,y\in\ell_p$, the mapping $f:(\ell_p,d)\to (\ell_p,d)$ is contracting with the Lipschitz constant $q=\frac 12$. Moreover, $|\pi_n(f(x))|=\frac{1}{3^n}d(x,C_n)\le 1$ for every $n\in\mathbb N$.

Assume that for some $\alpha<\omega_1$ the condition $(*)$ is valid for all $\beta<\alpha$. Let $C\subseteq (\ell_p,d)$ be any set of the $\alpha$'th additive class. Take an increasing sequence of sets $C_n$ such that $C=\bigcup\limits_{n=1}^\infty C_n$, where  every $C_n$ belongs to the multiplicative class $\beta$ if $\alpha=\beta+1$,  and in the case $\alpha=\sup\beta_n$ we can assume that $C_n$ belongs to the additive class $\beta_n$ for every $n\in\mathbb N$.  By the inductive assumption for every $n\in\mathbb N$ there exists a contracting mapping $f_n:(\ell_p,d)\to (\ell_{p},d)$ with the Lipschitz constant $q=\frac 12$ such that
\begin{gather}
  C_n=\left\{\begin{array}{ll}
                f_n^{-1}(B_\beta),  & \alpha=\beta+1,\\
                f_n^{-1}(A_{\beta_n}), & \alpha=\sup\beta_n,
             \end{array}\right.\\
  |\pi_m(f_n(x))|=|f_{nm}(x)|\le 1\quad\forall x\in\ell_p\,\,\,\forall n,m\in\mathbb N.
\end{gather}

For every $k\in\mathbb N$ we choose a unique pair $(n(k),m(k))\in\mathbb N^2$ such that
$$
k=t_{n(k)m(k)}\in T_{n(k)}.
$$
For every $x\in\ell_p$ we put
$$
f(x)=\bigl(\frac{1}{3^2} f_{n(1)m(1)}(x),\dots,\frac{1}{3^{k+1}} f_{n(k)m(k)}(x),\dots \bigr).
$$
It is easy to see that $f(x)\in \ell_p$ for every $x\in\ell_p$.

Since
\begin{gather*}
  \frac{1}{3}|f_{nm}(x)-f_{nm}(y)|\le d_p(f_n(x),f_n(y))
 \quad\mbox{and}\quad \frac{1}{3}|f_{nm}(x)-f_{nm}(y)|\le \frac 23\le 1,
\end{gather*}
we have
\begin{gather*}
\frac{1}{3}|f_{nm}(x)-f_{nm}(y)|\le d(f_n(x),f_n(y))\le \frac 12 d(x,y)
\end{gather*}
for all $x,y\in\ell_p$ and $n,m\in\mathbb N$.
Consequently,
\begin{gather*}
  d(f(x),f(y))\le d_p(f(x),f(y))=\Bigl(\sum\limits_{k=1}^\infty \frac{1}{3^{kp}}\Bigl(\frac 13|f_{n(k)m(k)}(x)-f_{n(k)m(k)}(y)|\Bigr)^p\Bigr)^{\frac 1p}\le \\
  \le\frac 12 d(x,y)\Bigl(\sum\limits_{k=1}^\infty \frac{1}{3^{kp}}\Bigr)^{\frac 1p}\le\frac 12d(x,y)
\end{gather*}
for all $x,y\in\ell_p$. Therefore, $f$ has the Lipshitz constant $q=\frac 12$.

Finally, it is easy to verify that $C=f^{-1}(A_\alpha)$.\hfill$\Box$

{\sc Claim 3.} {\it For every $\alpha\in [1,\omega_1)$ the set $A_\alpha$  belongs to the additive class $\alpha$  and does not belong to the multiplicative class $\alpha$ in $\ell_p$.}

{\it Proof of  Claim 3.} We first prove that the set $\tilde A_\alpha$ /$\tilde B_\alpha$/ is of the $\alpha$'th additive /multiplicative/ class in $\mathbb R^\omega$.

If $\alpha=1$, then $\tilde A_1=\bigcup\limits_{n=1}^\infty \sigma_n(0)$ is an $F_\sigma$-subset of $\mathbb R^\omega$, since every $\sigma_n(0)$ is closed in $\mathbb R^\omega$. Consequently, $\tilde B_1$ is $G_\delta$ in $\mathbb R^\omega$. Suppose that for some $\alpha\ge 1$ the set $\tilde A_\beta$ /$\tilde B_\beta$/ belongs to the additive /multiplicative/ class $\beta$ in $\mathbb R^\omega$ for every $\beta<\alpha$. Since every projection $\pi_{T_n}:\mathbb R^\omega\to\mathbb R^{T_n}$ is continuous, the set $\tilde A_\alpha$ belongs to the additive class $\alpha$ in $\mathbb R^\omega$ and the set $\tilde B_\beta$ belongs to the multiplicative class $\alpha$ in $\mathbb R^\omega$.

Since the topology of pointwise convergence on $\ell_p$ is weaker than the topology generated by the norm $\|\cdot\|_p$, for every $\alpha$ the set $A_\alpha$ /$B_\alpha$/ is of the $\alpha$'th additive /multiplicative/ class in $\ell_p$.

Fix $\alpha\in[1,\omega_1)$. In order to show that $A_\alpha$ does not belong to the $\alpha$'th multiplicative class we assume the contrary. Then there exists a contracting mapping $f:(\ell_p,d)\to (\ell_p,d)$ such that $A_\alpha=f^{-1}(B_\alpha)$. By the Contraction Map Principle, there exists a fixed point for the mapping $f$, which implies a contradiction.

It remains to put $E=A_\alpha$.
\end{proof}

\begin{theorem}\label{th:Baire_ssc}
  Let $\alpha\in[1,\omega_1)$ and $p\in[1,+\infty)$. Then there exists an ssc-function $f:\mathbb \ell_p\to\mathbb R$ which belongs to the $(\alpha+1)$'th Baire class and does not belong to the $\alpha$'th Baire class.
\end{theorem}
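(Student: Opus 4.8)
The plan is to let $E=A_\alpha\subseteq\ell_p$ be the $\mathcal S$-open set furnished by Lemma~\ref{lem:univ_alpha} — of the additive class $\alpha$ but not of the multiplicative class $\alpha$ — and to show that its characteristic function $f=\chi_E$ is the required example. The first step is the elementary observation that the indicator of \emph{any} $\mathcal S$-open set is automatically strongly separately continuous. Indeed, for arbitrary $x,a\in\ell_p$ and any variable $t$ the points $x$ and $x_t^a$ differ in at most one coordinate, so $x_t^a\in\sigma_1(x)$ and $x\in\sigma_1(x_t^a)$; since $E$ is $\mathcal S$-open, membership in $E$ is constant on each $\sigma_1$-class, hence $x\in E\iff x_t^a\in E$ and therefore $\chi_E(x)=\chi_E(x_t^a)$. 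Thus $d(\chi_E(x),\chi_E(x_t^a))\equiv 0$, the defining limit for strong separate continuity vanishes identically at every point and with respect to every variable, and $\chi_E\in{\rm SSC}(\ell_p)$ with no further work.

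It then remains to identify the Baire class of $\chi_E$, and for this I would use the classical description of the Baire class of a real-valued function on a metric space: $g$ is of the $\xi$-th Baire class ($1\le\xi<\omega_1$) if and only if each set $\{g>r\}$ and $\{g<r\}$, for $r\in\mathbb Q$, is of the additive class $\xi$. For $g=\chi_E$ the only nontrivial such preimages are $E$ and $\ell_p\setminus E$, so $\chi_E$ is of the $\xi$-th Baire class precisely when $E$ is simultaneously of the additive and of the multiplicative class $\xi$. The negative assertion is then immediate: were $\chi_E$ of the $\alpha$-th Baire class, the set $\ell_p\setminus E=\{\chi_E<1\}$ would be of the additive class $\alpha$, i.e. $E$ would be of the multiplicative class $\alpha$, contradicting the choice of $E$ in Lemma~\ref{lem:univ_alpha}. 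For the positive assertion, since $E$ is of the additive class $\alpha$ it is in particular both of the additive and of the multiplicative class $\alpha+1$ (every set of the additive class $\alpha$ is both of the additive and of the multiplicative class $\alpha+1$), whence $\chi_E$ is of the $(\alpha+1)$-th Baire class.

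The only point demanding care is the normalisation linking set classes to Baire classes: in the convention fixed in the proof of Lemma~\ref{lem:univ_alpha} (Claim~3) ``additive class $1$'' means $F_\sigma$, so that the correct matching is ``$g$ of Baire class $\xi$ $\iff$ $g^{-1}(\text{open})$ of additive class $\xi$'', with the usual shift corresponding to $\Sigma^0_{\xi+1}$ in the modern Borel index. The forward half of this equivalence, used for non-membership, is valid in any space, while the converse, used for membership in class $\alpha+1$, relies on $\ell_p$ being metric. Apart from tracking this indexing I expect no real obstacle, and the whole theorem reduces to the two observations above together with Lemma~\ref{lem:univ_alpha}.
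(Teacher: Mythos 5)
Your proposal is correct and follows exactly the paper's route: the paper also takes $E$ from Lemma~\ref{lem:univ_alpha} and declares $f=\chi_E$ to be the required example, merely omitting the verification that $\chi_E$ is strongly separately continuous and the Baire-class bookkeeping, which you supply correctly (including the indexing convention under which additive class $1$ means $F_\sigma$). No discrepancies to report.
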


\begin{proof}
By Lemma~\ref{lem:univ_alpha} there exists an $\mathcal S$-open set $E$ in $\ell_p$ such that $E$ belongs to the $\alpha$'th additive class and does not belong to the $\alpha$'th multiplicative class. Then the function $f:\ell_p\to\mathbb R$,
  \begin{gather*}
    f(x)=\chi_{E}(x)=\Bigl\{\begin{array}{ll}
                              1, & x\in E, \\
                              0, & x\not\in E,
                            \end{array}
  \end{gather*}
satisfies the required properties.
\end{proof}

The existence of  an ssc-function $f:\mathbb R^\omega\to\mathbb R$ which is not Baire measurable  was proved in \cite[Proposition 3.2]{KaRAEX:2015}.

\section{Discontinuities of ssc-functions on $\ell_p$}\label{sec:discont}
We denote by $\ell_p^\omega$ the set $\ell_p\subseteq\mathbb R^\omega$ endowed with the topology of pointwise convergence induced from $\mathbb R^\omega$. Evidently, ${\rm SSC}(\ell_p^\omega)\subseteq {\rm SSC}(\ell_p)$. The converse is not true as the following example shows.

\begin{example}
  There exists a continuous function $f:\ell_2\to\mathbb R$ which is not strongly separately continuous on $\ell_2^\omega$.
\end{example}

\begin{proof}
  For every $n\in\mathbb N$ we set
  \begin{gather*}
  x_n=(\mathop{\underbrace{\frac 1n,0,\dots,0}}_{n-1},n,0,\dots)\,\,\,\mbox{and}\,\,\,   y_n=(\mathop{\underbrace{0,\dots,0}}_{n-1},n,0,\dots).
  \end{gather*}
  Since the sets $F_1=\{x_n:n\in\mathbb N\}$ and $F_2=\{y_n:n\in\mathbb N\}$ are disjoint and closed in $\ell_2$, the function $f:\ell_2\to\mathbb R$ defined by the formula
  \begin{gather*}
    f(x)=\frac{d_2(x,F_1)}{d_2(x,F_1)+d_2(x,F_2)}
  \end{gather*}
  is continuous and $F_1=f^{-1}(0)$, $F_2=f^{-1}(1)$. Notice that $x_n\to 0$ in $\ell_2^\omega$ and $y_n=(x_n)_1^0$. But $f(x_n)-f(y_n)=1$ for every $n$, which implies that $f$ is not strongly separately continuous on $\ell_2^\omega$ at the point $x=0$ with respect to the first variable.
\end{proof}

By $C(f)$ ($D(f)$) we denote the set of all points of continuity (discontinuity) of a mapping $f$.

\begin{theorem}
 For any open nonempty set $G\subseteq \ell_p$ with $1\le p<\infty$ there exists a strongly separately continuous function $f:\ell_p\to \mathbb R$ such that $D(f)=G$.
\end{theorem}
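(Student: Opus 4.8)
The plan is to realize the desired function as a product $f=g\cdot\rho$ of two strongly separately continuous functions, where $\rho$ confines the discontinuities to $G$ and $g$ is a bounded, \emph{everywhere} discontinuous ssc-function serving as a universal source of discontinuity. Set $H=\ell_p\setminus G$ (closed, since $G$ is open) and let $\rho(x)=d_p(x,H)=\inf_{h\in H}\|x-h\|_p$; in the degenerate case $G=\ell_p$ simply put $\rho\equiv 1$. The function $\rho$ is $1$-Lipschitz, hence continuous and therefore ssc, and it vanishes exactly on $H$.

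For the universal block I would take
\[ g(x)=\limsup_{n\to\infty}\min\{n|x_n|,1\}\in[0,1]. \]
The crucial observation making $g$ strongly separately continuous is that it depends only on the tail of $x$: resetting one coordinate does not affect a $\limsup$ over $n\to\infty$, so $g(x_t^a)=g(x)$ for \emph{every} $x$ and every index $t$. Hence $|g(x)-g(x_t^a)|=0$ identically, and $g$ is ssc at every point with respect to every variable; moreover $0\le g\le 1$, so $g$ is bounded.

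Then I would form $f=g\cdot\rho$. Since $\ell_p$ is locally projectively symmetric, Theorem~\ref{th:operations-ssc}(i) shows that the product of two functions that are ssc at every point is again ssc, so $f\in{\rm SSC}(\ell_p)$. Continuity of $f$ on $H$ is immediate from $0\le f(x)\le\rho(x)$: if $a\in H$ then $f(a)=0$ and $f(x)\le\rho(x)\to\rho(a)=0$, so $f$ is continuous at $a$. For the discontinuity on $G$, note that if $a\in G$ then $\rho(a)>0$ and, by continuity of $\rho$, one has $\rho>0$ on a neighbourhood $U$ of $a$, where $g=f/\rho$; were $f$ continuous at $a$, the quotient $f/\rho$ would be continuous at $a$, forcing $g$ to be continuous at $a$. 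Thus everything reduces to proving that $g$ is discontinuous at every point of $\ell_p$, which then yields $D(f)=G$.

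This last claim about $g$ is the main work, and the only place I expect genuine technical care. To show that $g$ has oscillation at least $1$ at an arbitrary $a\in\ell_p$, I would produce, in every norm-ball about $a$, one point where $g=0$ and one where $g=1$. For the first, the truncations $P_m a=(a_1,\dots,a_m,0,0,\dots)$ have finite support, so $g(P_m a)=0$, while $\|P_m a-a\|_p\to 0$ because $a\in\ell_p$. For the second, I would fix a very sparse increasing sequence $n_k\to\infty$ and overwrite the coordinates $n_k$ of $a$ by $1/n_k$ for $k\ge j$, obtaining a point $x^{(j)}$ with $\min\{n_k|x^{(j)}_{n_k}|,1\}=1$ for infinitely many $k$, whence $g(x^{(j)})=1$; at the same time $\|x^{(j)}-a\|_p^p\le\sum_{k\ge j}(1/n_k+|a_{n_k}|)^p\to 0$, using both the summability of $\sum_k 1/n_k^p$ (guaranteed by sparsity) and the decay of the tail of $a$. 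Consequently the oscillation of $g$ at $a$ is at least $1$, so $g$ is discontinuous everywhere, and the construction is complete.
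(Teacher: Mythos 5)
Your proof is correct, and it follows the same overall decomposition as the paper: a product $f=\varphi\cdot g$ of a continuous cutoff $\varphi$ vanishing exactly on $\ell_p\setminus G$ with a bounded, everywhere-discontinuous ssc factor $g$. The difference lies entirely in the choice of the discontinuous factor, and yours is arguably cleaner. The paper takes $g(x)=\exp(-\sum_n|x_n|)$ on the set $\sigma(0)$ of finitely supported sequences and $g=1$ off it; showing that this $g$ is ssc requires a genuine $\varepsilon$--$\delta$ estimate, and its contribution to discontinuity at $x^0\in G$ is extracted by finding points of $\sigma(0)$ near $x^0$ whose partial sums $\sum_{n\le N}|x_n|$ exceed a fixed positive threshold --- a step that is delicate (particularly for $p=1$, where nearby points may have uniformly small $\ell_1$-norm, so that one must argue more carefully than by merely choosing $x\notin\sigma(0)$). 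Your $g(x)=\limsup_n\min\{n|x_n|,1\}$ depends only on the tail of $x$, so $g(x_t^a)=g(x)$ identically and strong separate continuity costs nothing; and you obtain discontinuity in the strongest form --- oscillation $1$ at every point of $\ell_p$ --- by exhibiting in every ball both a truncation $P_ma$ (where $g=0$) and a sparse perturbation with $x^{(j)}_{n_k}=1/n_k$ for $k\ge j$ (where $g=1$), the latter close to $a$ because $\sum_k n_k^{-p}<\infty$. This makes the argument uniform in $p\in[1,\infty)$ and fully decouples the two roles of $g$. The remaining steps --- continuity on $\ell_p\setminus G$ from $0\le f\le\rho$, discontinuity on $G$ from $g=f/\rho$ on a neighbourhood where $\rho>0$, and ssc-ness of the product via Theorem~\ref{th:operations-ssc} (or directly, since $g$ is bounded and $\rho$ is Lipschitz) --- are all sound.
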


\begin{proof} Fix $p\in [1,+\infty)$ and let $\sigma=\sigma(0)=\{(x_n)_{n=1}^\infty\in\ell_p: (\exists k\in\mathbb N)\,\,\, (\forall n\ge k)\,\,\,(x_n=0)\}$. Denote $F=\ell_p\setminus G$. For every $x=(x_n)_{n\in\mathbb N}\in \ell_p$ we put
\begin{gather*}
   \varphi(x)=\left\{\begin{array}{ll}
                       \min\{d_p(x,F),1\}, &   F\ne\emptyset,\\
                       1, &  F=\emptyset,
                     \end{array}
   \right.\\
g(x)=\left\{\begin{array}{ll}
                       \exp(-\sum\limits_{n=1}^\infty |x_n|), &  x=(x_n)_{n=1}^\infty\in \sigma,\\
                       1, &  x\in \ell_p\setminus \sigma,
                     \end{array}
   \right.
\end{gather*}
and let
\begin{gather*}
      f(x)=\varphi(x)\cdot g(x).
\end{gather*}

{\sc Claim 1.}  {\it $F\subseteq C(f)$.}

{\it Proof.} Fix $x^0\in F$ and take any convergent sequence $(x^m)_{m=1}^\infty$ to $x^0$ in $\ell_p$.  Notice that
\begin{gather*}
  \lim\limits_{m\to\infty}\varphi(x^m)g(x^m)=0,
\end{gather*}
because $\varphi(x)$ is continuous at $x^0$ and $g(x)$ is bounded. Then
\begin{gather*}
  \lim\limits_{m\to\infty}f(x^m)=0=f(x^0).
\end{gather*}
Hence, $x^0\in C(f)$.

{\sc Claim 2.} {\it $G\subseteq D(f)$.}

{\it Proof.} Fix $x^0\in G$. Then $f(x^0)>0$. We put $\varepsilon=\frac 12 f(x^0)$ and take an arbitrary $\delta>0$. Since the set $D=\ell_p\setminus\sigma$ is dense in $\ell_p$, there exists $x=(x_n)_{n\in\mathbb N}\in\ell_p$ such that
\begin{gather*}
  \|x-x^0\|_p<\frac{\delta}{2}\,\,\,\mbox{and}\,\,\, x\not\in\sigma.
\end{gather*}
Take a number $N$ such that
\begin{gather*}
  \sum\limits_{n=1}^N |x_n|>\ln\Bigl(\frac{1}{f(x^0)-\varepsilon}\Bigr)\,\,\,\,\mbox{and}\,\,\,\sum\limits_{n=N+1}^\infty |x_n|^p<\Bigl(\frac{\delta}{2}\Bigr)^p.
\end{gather*}
We put
$$
y=(x_1,\dots,x_N,0,0,\dots).
$$
Then $y\in\sigma$ and
\begin{gather*}
\|y-x^0\|_p\le \|y-x\|_p+\|x-x^0\|_p=\Bigl(\sum\limits_{n=N+1}^\infty |x_n|^p\Bigr)^{\frac 1p}+\|x-x^0\|_p<\frac\delta 2+\frac\delta 2=\delta.
\end{gather*}
But
\begin{gather*}
  f(x^0)-f(y)=f(x^0)-\varphi(y)\cdot \exp\bigl(-\sum\limits_{n=1}^N |x_n|\bigr)>f(x^0)- \exp\bigl(-\sum\limits_{n=1}^N |x_n|\bigr)>f(x^0)+\varepsilon-f(x^0)=\varepsilon,
\end{gather*}
which implies that $f$ is discontinuous at $x^0$.

{\sc Claim 3.} {\it $f:\ell_p\to\mathbb R$ is strongly separately continuous.}

{\it Proof.} Let $x^0\in\ell_p$. Evidently, $f$ is strongly separately continuous at $x^0$ if  $x^0\in F$. Therefore, we assume that $x^0\in G$.
Fix $k\in\mathbb N$  and $\varepsilon>0$. Take $\delta_1>0$ with $B(x^0,\delta_1)\subseteq G$. Since $\varphi(x)$ is continuous at $x^0$, there exists $\delta_2>0$ such that
$$
|\varphi(x)-\varphi(x^0)|<\frac{\varepsilon}{4}
$$
for all $x\in B(x^0,\delta_2)$. Put
$$
\delta=\min\{\delta_1,\delta_2,\ln \bigl(1+\frac\varepsilon 2\bigr)\}.
$$
Now let $x\in B(x^0,\delta)$ and $y=x_k^{x^0}\in B(x^0,\delta)$. If $x\not\in\sigma$, then $y\not\in\sigma$. In this case
$$
|f(x)-f(y)|=|\varphi(x)-\varphi(y)|\le |\varphi(x)-\varphi(x^0)|+|\varphi(x^0)-\varphi(y)|<\varepsilon.
$$
Assume that $x\in \sigma$. Then $y\in\sigma$ and
\begin{gather*}
  |f(x)-f(y)|\le |g(x)||\varphi(x)-\varphi(y)|+|\varphi(y)||g(x)-g(y)|<\\
  <\frac{\varepsilon}{2}+|\exp(-\sum\limits_{n=1}^\infty |x_n|)-\exp(-\sum\limits_{n=1}^\infty |y_n|)|<\frac{\varepsilon}{2}+|\exp(\sum\limits_{n=1}^\infty |y_n|-\sum\limits_{n=1}^\infty |x_n|)-1|.
\end{gather*}
Taking into account that
$$
\exp(-\sum\limits_{n=1}^\infty |x_n-y_n|)\le \exp(\sum\limits_{n=1}^\infty |x_n|-\sum\limits_{n=1}^\infty |y_n|)\le \exp(\sum\limits_{n=1}^\infty |x_n-y_n|),
$$
we obtain that
\begin{gather*}
  |f(x)-f(y)|<\frac{\varepsilon}{2}+\exp(|x_k-x_k^0|)-1<\frac{\varepsilon}{2}+\exp(\delta)-1<\frac{\varepsilon}{2}+\frac{\varepsilon}{2}=\varepsilon.
\end{gather*}
Hence, $f$ is strongly separately continuous at $x^0$ with respect to the $k$'th variable.
\end{proof}

\end{document}